\newtheorem{thm}{Theorem}[section]
\newtheorem{lem}[thm]{Lemma}
\theoremstyle{definition}
\theoremstyle{remark}
\newtheorem{rem}[thm]{Remark}
\begin{document}

\title[Remarks]{Remarks on the quadratic Hessian equation}
\author{Connor Mooney}
\address{Department of Mathematics, UC Irvine}
\email{\tt mooneycr@uci.edu}
\subjclass[2010]{35J60, 35B65}
\keywords{sigma-2 equation, regularity}

\begin{abstract}
We prove that viscosity solutions to the quadratic Hessian equation
$$\sigma_2(D^2u) = 1$$
cannot touch a harmonic function on a minimal surface from below. This can be viewed as a form of strict $2$-convexity. We also prove an a priori interior $C^2$ estimate in terms of the $W^{2,\,p}$ norm, for any $p > 2$. 
Finally, we discuss how these results rule out certain strategies for constructing counterexamples to regularity.
\end{abstract}

\maketitle

\section{Introduction}
The $k$-Hessian equation
\begin{equation}\label{Sigmak}
\sigma_k(D^2u) = 1
\end{equation}
is among the most well-studied fully nonlinear elliptic PDEs. Here $u$ is a function on a domain in $\mathbb{R}^n$, $k \leq n$, and $\sigma_k$ denotes the $k^{\text{th}}$ symmetric polynomial of the eigenvalues. This is on one hand due to the appealing simplicity of its structure, and on the other, due to its appearance in geometric applications.

When $k = 1$, (\ref{Sigmak}) is the Laplace equation and viscosity solutions (see Section \ref{Prelims} for the definition) to (\ref{Sigmak}) are smooth. In contrast, there exist singular viscosity solutions to (\ref{Sigmak}) when $n \geq k \geq 3$, due to Pogorelov \cite{P} and Urbas \cite{U2}. In the case $k = 2$, viscosity solutions are smooth in dimension $n = 2$ \cite{H}, $n = 3$ \cite{WY}, and $n = 4$ \cite{SY2}. The interior regularity question for the quadratic Hessian equation
\begin{equation}\label{Sigma2}
\sigma_2(D^2u) = 1
\end{equation} 
remains open in dimensions five and higher.

The purpose of this note is to prove a couple of results about (\ref{Sigma2}) that both generalize previous results, and show that certain strategies for constructing singular solutions to (\ref{Sigma2}) will not work. The first result is:
\begin{thm}\label{Main1}
Assume that $u$ solves (\ref{Sigma2}) in the viscosity sense on $B_1 \subset \mathbb{R}^n$. Then for any smooth, open, embedded portion $\Sigma$ of a minimal hypersurface in $B_1$ and harmonic function $h$ on $\Sigma$, $u|_{\Sigma}$ cannot touch $h$ from below.
\end{thm}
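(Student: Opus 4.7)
The plan is to derive a contradiction by producing a smooth upper test function for $u$ whose Hessian at the touching point has a structure incompatible with membership in the admissible cone $\Gamma_2$. This combines an algebraic fact about $\Gamma_2$-matrices with a geometric identity for minimal hypersurfaces. Algebraically, for any $A\in\Gamma_2:=\{\sigma_1>0,\,\sigma_2>0\}$ and unit vector $\nu$,
$$\operatorname{tr}(A|_{\nu^\perp})\;=\;\sigma_1(A)-\langle A\nu,\nu\rangle\;\geq\;\sigma_1(A)-\lambda_{\max}(A)\;=\;\sum_{i<n}\lambda_i(A)\;>\;0,$$
the last strict inequality being a standard consequence of Newton's inequality and $\sigma_1,\sigma_2>0$. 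Geometrically, if $\phi$ is smooth on $\mathbb{R}^n$ near $x_0\in\Sigma$ with $\phi|_\Sigma=h$, then
$$\operatorname{tr}\bigl(D^2\phi(x_0)|_{T_{x_0}\Sigma}\bigr)\;=\;\Delta_\Sigma h(x_0)+H(x_0)\,\partial_\nu\phi(x_0)\;=\;0,$$
using minimality of $\Sigma$ ($H=0$) and harmonicity of $h$ ($\Delta_\Sigma h=0$). Thus if some smooth $\phi$ with $\phi|_\Sigma=h$ touched $u$ from above at $x_0$, the viscosity subsolution property would force $D^2\phi(x_0)\in\Gamma_2$ with $\sigma_2(D^2\phi(x_0))\geq 1$, and the two displays above would contradict each other.

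The natural candidate barrier is $\phi_K(x)=h(\pi(x))+\tfrac{K}{2}\rho(x)^2$, where $\pi$ and $\rho$ are the closest-point projection and signed distance to $\Sigma$, and $K$ is chosen large. By construction $\phi_K|_\Sigma=h$, and on $\Sigma$ we have $\phi_K\geq u$ with equality at $x_0$. To upgrade this to domination in a full ambient neighborhood, set $M_K:=\sup_{\overline{B_r}(x_0)}(u-\phi_K)\geq 0$; then $\phi_K+M_K$ is a smooth upper barrier for $u$ on $\overline{B_r}$, touching from above at some point $x_K^*\in\overline{B_r}$. If $x_K^*=x_0$ then $M_K=0$, $(\phi_K+M_K)|_\Sigma=h$ with the desired harmonic structure, and the two facts above apply directly.

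The principal obstacle is the case where $x_K^*\notin\Sigma$. Invoking the $C^{1,\alpha}$ regularity of $2$-admissible viscosity solutions to $\sigma_2=1$, $u$ is differentiable at $x_0$; the touching condition forces the tangential component of $\nabla u(x_0)$ to equal $\nabla_\Sigma h(x_0)$, and with $a:=\partial_\nu u(x_0)$ one obtains the local expansion $u(x)-h(\pi(x))=a\rho(x)+o(|x-x_0|)$. Maximizing the right-hand side minus $\tfrac{K}{2}\rho^2$ yields $|\rho(x_K^*)|=O(1/K)$ and $M_K=O(1/K)$, so that $x_K^*$ approaches $\Sigma$ as $K\to\infty$. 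At $x_K^*$ one then computes $\sigma_2(D^2\phi_K(x_K^*))$ directly, using the Jacobi equation for parallel hypersurfaces (giving $\Delta\rho=O(\rho)$ near $\Sigma$ by minimality) and the first-order expansion of $\Delta_{g^t}h$ in $t$ (whose zeroth-order term vanishes by harmonicity). Tracking the $O(K)$, $O(1)$, and $O(1/K)$ contributions, the joint cancellations from harmonicity and minimality keep $\sigma_2(D^2\phi_K(x_K^*))$ uniformly bounded as $K\to\infty$ rather than growing with $K$; confirming that this bound is strictly less than $1$ for $K$ large --- and thereby contradicting the subsolution inequality $\sigma_2\geq 1$ --- is the technical heart of the argument, with additional care needed in degenerate configurations (for example when $\Sigma$ is totally geodesic at $x_0$, or $a=0$, where a slightly different sliding or perturbation of $h$ on $\Sigma$ may be required).
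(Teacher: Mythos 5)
Your opening observation is correct and appealing: if a smooth $\phi$ with $\phi|_\Sigma = h$ touched $u$ from above \emph{at a point of} $\Sigma$, then the identity $\operatorname{tr}(D^2\phi|_{T\Sigma}) = \Delta_\Sigma h + H\phi_\nu = 0$ together with the fact that matrices in $\Gamma_2$ have positive trace on every hyperplane would give an immediate contradiction. (This is the subsolution-side twin of the computation in Section \ref{Discussion} showing that $\operatorname{dist}(\cdot,\Sigma)$ is a supersolution.) But you correctly identify, and then do not overcome, the real difficulty: the hypothesis $u \leq h$ only holds \emph{on} $\Sigma$, so the touching point of $\phi_K = h\circ\pi + \tfrac{K}{2}\rho^2$ (after sliding) generically sits at distance $t \sim a/K$ off $\Sigma$, where $a$ is of the order of the Lipschitz constant of $u$. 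At such a point the tangential trace of $D^2\phi_K$ is $T = O(C_1 t) + O(C_0 K t^2) = O(1/K)$ with \emph{indeterminate sign}, and $\sigma_2(D^2\phi_K) \approx \tfrac{1}{2}T(T+2K) = O(1)$, with a constant depending on $a$, the curvature of $\Sigma$, and $h$. There is no smallness mechanism forcing this $O(1)$ quantity below $1$, so no contradiction with $\sigma_2 \geq 1$ is reached; the step you call ``the technical heart'' is precisely the missing proof. Two further problems: (i) you invoke $C^{1,\alpha}$ regularity (even differentiability at $x_0$) of viscosity solutions to $\sigma_2 = 1$, which is not a known result for $n \geq 5$ --- only the Lipschitz bound of Theorem \ref{GradEst} is available, and this is part of what makes the regularity problem open; (ii) the sliding argument needs the maximum of $u - \phi_K$ over $\overline{B_r}$ to be interior, and the natural localizer $+\epsilon|x-x_0|^2$ adds $2(n-1)\epsilon$ to $T$ and hence $\sim \epsilon K$ to $\sigma_2$, which destroys any hoped-for bound unless $\epsilon \ll 1/K$, in which case it no longer localizes.

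The paper's proof uses a structurally different mechanism that sidesteps the pointwise test entirely. The barrier $\psi_\delta = \delta\varphi f(d) + h + Kd$ with $f(s) = s\sqrt{|\log s|}$ is shown to satisfy $\sigma_2(D^2\psi_\delta) < 1$ \emph{uniformly on a slab} $\{0 < d < \tau\}$: the singular normal second derivative $\delta\varphi f''$ appears in both $\Delta\psi_\delta$ and $(\psi_\delta)_{\nu\nu}$, and in $(\Delta\psi_\delta)^2 - (\psi_\delta)_{\nu\nu}^2$ the unbounded $f''$ only survives in products like $ff''$ and $sf'f''$ that stay bounded by (\ref{FBounds}), while minimality and harmonicity keep $(\Delta h + K\Delta d)^2 < 1$ near $\Sigma$. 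Comparison over the slab then gives $u \leq \psi_\delta$, and since $f'(0^+) = +\infty$ and $\varphi(0) < 0$, the barrier drops below $h(0) = u(0)$ with infinite slope, contradicting the Lipschitz continuity of $u$ rather than the subsolution inequality at a single point. If you want to rescue your approach you would need to build a comparable singular correction into $\phi_K$; as written, the proposal has a genuine gap at its central step.
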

\noindent Theorem \ref{Main1} can be viewed as a form of strict $2$-convexity. For example, in the case that $u$ is convex, Theorem \ref{Main1} implies that the dimension of the agreement set between $u$ and a supporting linear function is at most $n-2$, recovering (with a different proof) the main result from \cite{M5}. More generally, minimal surfaces are natural objects in the study of the $\sigma_2$ equation because the level sets of a solution are mean-convex. Theorem \ref{Main1} proves that the level sets are in a sense strictly mean-convex. The analogous statement for (\ref{Sigmak}) when $n \geq k \geq 3$, namely, that the level sets are strictly $k-1$-convex, is false in view of the singular solutions (\cite{P},\,\cite{U2}) mentioned above.

The second result is:
\begin{thm}\label{Main2}
Assume that $u$ is a smooth solution to (\ref{Sigma2}) in $B_1 \subset \mathbb{R}^n$. Then for any $p > 2$ we have 
\begin{equation}\label{C2}
\|D^2u\|_{L^{\infty}(B_{1/2})} \leq C\left(n,\,p,\,\|D^2u\|_{L^p(B_1)}\right).
\end{equation}
\end{thm}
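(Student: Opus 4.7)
The plan is a Moser iteration driven by the divergence structure of the linearized operator $L\phi := \sigma_2^{ij}\,\partial_{ij}\phi$, where $\sigma_2^{ij} := (\Delta u)\,\delta_{ij} - u_{ij}$. A direct computation with the Newton tensor gives $\partial_j \sigma_2^{ij} \equiv 0$, so $L$ may be written in divergence form, which is what makes the $\sigma_2$ equation amenable to integral energy estimates even without uniform ellipticity. The threshold $p=2$ in the statement is the scaling-critical value, since $\sigma_2(D^2u)\sim |D^2u|^2$ and $\int \sigma_2\,dx$ is the natural ``energy.''

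First, I would differentiate $\sigma_2(D^2u)=1$ twice to obtain, for each coordinate direction $e_k$, the identity
\[
L(u_{kk}) \;=\; -2\,\sigma_2(D^2 u_k) \;=\; |D^2 u_k|^2 - (\Delta u_k)^2.
\]
Evaluating at a point where the maximum eigenvalue $M := \lambda_{\max}(D^2u)$ is attained, and handling the non-smoothness of $M$ in the standard way (test perturbations in the eigenvector direction), one expects to extract a viscosity inequality of the form $L(M^\alpha) \geq -C\,M^\alpha$ for some exponent $\alpha=\alpha(n)>0$. Next, I would convert this pointwise inequality into an integral estimate: test against $\eta^2 M^\beta$ with $\eta$ a cutoff and $\beta$ large, integrate by parts using the divergence-freeness of $\sigma_2^{ij}$, and apply Cauchy--Schwarz to absorb the cross-term, giving a weighted Caccioppoli bound. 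A weighted Sobolev inequality adapted to $L$ would then yield a reverse-H\"older inequality for $M$, and standard Moser iteration would lift the initial $L^p$ bound on $|D^2u|$ to an $L^\infty$ bound on $B_{1/2}$.

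The main obstacle is the weighted Sobolev inequality: the matrix $\sigma_2^{ij}$ has trace $(n-1)\Delta u$ of the order of $|D^2u|$, but its smallest eigenvalue is $\sigma_1-\lambda_{\max}$, which can vanish, so the uniformly-elliptic Moser scheme does not apply verbatim. My approach would be to exploit that the level sets of $u$ are mean-convex hypersurfaces (since $\sigma_2\geq 0$) and combine the Michael--Simon Sobolev inequality on these slices with the coarea formula to produce a global weighted embedding. An alternative I would consider is a compactness/blow-up argument: assuming the estimate fails, rescale near points of large Hessian to obtain a limit solution with a Hessian concentration singularity, and apply Theorem \ref{Main1} --- whose strict $2$-convexity precisely rules out the kind of degeneration arising from such a blow-up --- to derive a contradiction; the hypothesis $p>2$ would enter by ensuring precompactness of the rescaled sequence in the relevant topology.
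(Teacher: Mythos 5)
Your setup is reasonable as far as it goes: the divergence-freeness of $\sigma_2^{ij}$ is genuine, and the inequality $L(\Delta u)\geq 0$ (which follows from the convexity of $\{\sigma_2=1\}\cap\Gamma_2$, equivalently from your identity $L(u_{kk})=-2\sigma_2(D^2u_k)$ summed over $k$) is exactly the subsolution property the paper's argument also rests on. But you do not need the maximum eigenvalue or any viscosity perturbation argument: the equation gives $\Delta u=\sqrt{2+|D^2u|^2}$, so a bound on $\Delta u$ already controls the full Hessian, and working with the smooth subsolution $\psi=\Delta u$ removes that layer of your argument entirely.

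The genuine gap is the one you flag yourself: the passage from a Caccioppoli inequality to a reverse H\"older inequality. For Moser iteration you need a Sobolev inequality adapted to the degenerate metric $g=(\det DF)^{1/(n-2)}(DF)^{-1}$, and neither of your proposed substitutes closes it. The Michael--Simon inequality lives on the level sets of $u$, whose intrinsic geometry is not controlled by $DF$ in the directions where $DF$ degenerates (the smallest eigenvalue $\sigma_1-\lambda_{\max}$ can vanish precisely where $|D^2u|$ is large, which is the regime you must handle), and the coarea factor $|\nabla u|$ introduces its own uncontrolled weights. The blowup alternative fails for two reasons: one-homogeneous rescaling $u_r(x)=u(rx)/r$ scales the $L^p$ norm of the Hessian like $r^{p-n}\|D^2u\|^p_{L^p(B_r)}$, so a $W^{2,p}$ bound with $p>2$ gives no compactness unless $p\geq n$ (in particular $p=2$ is not the scaling-critical exponent in the sense you invoke; the relevance of $p>2$ is rather that $\Delta u$ is then slightly better than integrable with respect to the volume of $g$); and Theorem \ref{Main1} does not exclude general Hessian concentration in a blowup limit, only the very specific degeneration of touching a harmonic function on a minimal surface from below. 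The paper resolves the degeneracy by a different mechanism altogether: a sliding-paraboloid (ABP/Krylov--Safonov) measure estimate in which the Jacobian of the vertex map is bounded, via (\ref{BasicIneqs}) and the arithmetic-geometric mean inequality, by $C(n)(\Delta u)^2$ on the contact set; H\"older's inequality then converts the hypothesis $\Delta u\in L^p$, $p>2$, into a lower bound on the measure of sublevel sets of nonnegative supersolutions, and a doubling iteration applied to $\psi=\Delta u$ finishes. That determinant bound is the quantitative replacement for uniform ellipticity, and some analogue of it is exactly what your integral scheme is missing.
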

\noindent The same estimate was obtained for $p > n-1$ by Urbas \cite{U1} and for $p = 3$ by Shankar and Yuan, see Remark 2.2 in \cite{SY1}.

Theorems \ref{Main1} and \ref{Main2} grew out of attempts to construct singular viscosity solutions to (\ref{Sigma2}). 
One motivation for Theorem \ref{Main1} is that the distance function from a minimal surface is a viscosity supersolution to (\ref{Sigma2}) (see Section \ref{Discussion}). An interesting feature of this function is that its graph has an ``upwards corner" more typical of subsolution behavior in elliptic PDEs. One might thus hope to use the maximum principle to build a singular solution that is trapped between the distance function from a minimal surface and some subsolution that also vanishes and has an ``upwards singularity" along the minimal surface. However, Theorem \ref{Main1} rules out this approach. Theorem \ref{Main1} is also motivated by the Pogorelov-type interior $C^2$ estimate for (\ref{Sigmak}) due to Chou and Wang \cite{CW} (see Theorem \ref{ChouWang} below), which says that solutions to (\ref{Sigmak}) that are strictly $k$-convex in an appropriate sense are smooth. The approach to the regularity problem for (\ref{Sigma2}) suggested by the Chou-Wang estimate (that is, to understand the correct notion of strict $2$-convexity and then establish it) seems to have potential. For example, in \cite{M5}, the main result is combined with the estimate of Chou-Wang to show that convex viscosity solutions to (\ref{Sigma2}) are smooth. See Section \ref{CWSec} and the end of Section \ref{SecondProof} for further discussion.

\begin{rem}
The regularity of convex, resp. semiconvex solutions to (\ref{Sigma2}) was investigated using different methods by Guan and Qiu \cite{GQ} and McGonagle, Song, and Yuan \cite{MSY}, resp. Shankar and Yuan (\cite{SY3}, \cite{SY1}).
\end{rem}

Other singularity models are suggested by blowup analysis. Since viscosity solutions to (\ref{Sigmak}) are locally Lipschitz (see Theorem \ref{GradEst} and the remark after it), blowup by one-homogeneous rescaling yields a global Lipschitz viscosity solution to
\begin{equation}\label{Homogeneous2}
\sigma_2(D^2u) = 0.
\end{equation}
Simple examples of global Lipschitz solutions to (\ref{Homogeneous2}) are those of the form 
$$u = w(x'),$$
where $x' \in \mathbb{R}^m$ for some $m < n$ and $w$ is one-homogeneous and smooth away from the origin. The cases $m \leq 3$ all turn out to be the same, and by Theorem \ref{Main1}, the simplest way that such singularity models could arise is ruled out. On the other hand, when $m \geq 3$, such functions are in $W^{2,\,p}$ with $p > 2$, so Theorem \ref{Main2} suggests that these models are unlikely to arise. Combined with the results in \cite{SY1} and \cite{SY2}, these observations indicate that if singular solutions to (\ref{Sigma2}) exist, they are probably fairly complicated. See Section \ref{Discussion} for further discussion.

The proof of Theorem \ref{Main1} is based on the construction of a barrier that agrees with $h$ on $\Sigma$ and has gradient blowing up close to $\Sigma$. The result is obtained by combining this barrier with an interior gradient estimate for (\ref{Sigmak}) due to Trudinger \cite{T}. The proof of Theorem \ref{Main2} is based on Krylov-Safonov and Alexandrov-Bakelman-Pucci methods that have previously been used to prove interior $C^2$ estimates for concave, uniformly elliptic equations. We follow the general idea presented in the proof of Theorem 4.8 in the book of Caffarelli and Cabr\'{e} \cite{CC}. Another method that works, mentioned in Remark 2.2 from \cite{SY1}, is to adapt the proof of Theorem 9.20 in the book \cite{GT} of Gilbarg and Trudinger. However, we feel that the method we present connects more transparently with the Riemannian geometry underlying the equation (\ref{Sigma2}), and it also sheds light on the role of strict $k$-convexity in the Chou-Wang estimate mentioned above.

The paper is organized as follows. In Section \ref{Prelims} we recall a few useful facts about the $k$-Hessian equation. In Section \ref{FirstProof} we prove Theorem \ref{Main1}. In Section \ref{SecondProof} we prove Theorem \ref{Main2}. Finally, in Section \ref{Discussion} we discuss in more detail the motivations for these results, and the implications the results have for the $\sigma_2$ regularity problem.

\section*{Acknowledgements}
This work was supported by a Simons Fellowship, a Sloan Fellowship, and NSF grant DMS-2143668.


\section{Preliminaries}\label{Prelims}
In this section we recall a few well-known facts about the $k$-Hessian equation.

\subsection{Structure}
The G\r{a}rding cone of $k$-convex matrices is defined by
$$\Gamma_k := \{M \in \text{Sym}_{n \times n} : \sigma_i(M) > 0 \text{ for all } i \leq k\}.$$
We say that a $C^2$ function $\varphi$ is $k$-convex if 
$$D^2\varphi \in \overline{\Gamma_k}.$$
The function $\sigma_k$ is elliptic in $\Gamma_k$, uniformly elliptic on compact subsets of $\Gamma_k$, and has convex level sets in $\Gamma_k$. As a consequence of this, the Evans-Krylov theorem (\cite{E}, \cite{K}), and Schauder estimates, $C^2$ $k$-convex solutions to (\ref{Sigmak}) are smooth and enjoy interior derivative estimates of all orders depending on their $C^2$ norm.

In the case $k = 2$, the equation (\ref{Sigmak}) can be written
\begin{equation}\label{Sigma2'}
F(D^2u) := \sigma_2(D^2u) = \frac{1}{2}[(\Delta u)^2 - |D^2u|^2] = 1.
\end{equation}
The linearized operator at a smooth solution $u$ to (\ref{Sigma2'}) is $F_{ij}(D^2u)\partial_i\partial_j$, where
$$F_{ij}(D^2u) := \Delta u\,\delta_{ij} - u_{ij}.$$
We let $DF$ denote the matrix with entries $F_{ij}$. The matrix $DF$ satisfies
\begin{equation}\label{BasicIneqs}
\det DF \geq c(n)(\Delta u)^{n-2},
\end{equation}
see e.g. Corollary 2.1 in \cite{SY2}.

\subsection{Classical Solvability of the Dirichlet Problem}

In \cite{CNS}, Caffarelli, Nirenberg, and Spruck established:
\begin{thm}\label{Classical}
Let $R > 0$ and let $g \in C^{\infty}(\partial B_R)$. There exists a unique $k$-convex solution $u \in C^{\infty}\left(\overline{B_R}\right)$ to
$$\sigma_k(D^2u) = 1 \text{ in } B_R, \quad u|_{\partial B_R} = g.$$
\end{thm}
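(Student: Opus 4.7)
The plan is to use the continuity method, combined with \emph{a priori} $C^{2,\alpha}$ estimates for $k$-convex classical solutions. First I would establish uniqueness: since $\sigma_k^{1/k}$ is concave on $\Gamma_k$, a standard comparison argument shows that any two $k$-convex classical solutions with the same boundary data coincide. For existence, interpolate the boundary data between $g$ and the restriction to $\partial B_R$ of a known $k$-convex solution, for instance $u_0(x) = c|x|^2/2$ with $c = \binom{n}{k}^{-1/k}$ so that $\sigma_k(D^2 u_0) = 1$. Let $T \subset [0,1]$ denote the set of parameters $t$ for which a $k$-convex $C^{2,\alpha}$ solution $u_t$ with boundary data $g_t := (1-t)\, u_0|_{\partial B_R} + t\, g$ exists. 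Openness of $T$ follows from the implicit function theorem in $C^{2,\alpha}$: the linearization $F^{ij}(D^2 u_t)\partial_{ij}$ is uniformly elliptic since $DF(D^2u_t)$ is positive definite on $\Gamma_k$, and the linearized Dirichlet problem is uniquely solvable by the Fredholm alternative.

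Closedness is where the difficulty lies, and reduces via Evans--Krylov (applicable because $\sigma_k^{1/k}$ is concave) and Schauder theory to obtaining uniform $C^2$ estimates up to the boundary. The $C^0$ bound follows from comparison: the quadratic $A|x|^2/2 + B$ with $A \geq \binom{n}{k}^{-1/k}$ and $B$ chosen so that it lies below $g$ on $\partial B_R$ gives a lower barrier, while the harmonic extension of $g$ gives an upper bound (using that $u$ is $k$-convex, hence subharmonic). The $C^1$ bound is obtained from barrier constructions at $\partial B_R$ based on these sub- and super-solutions, together with standard interior gradient estimates for $k$-Hessian equations. For the $C^2$ bound, a maximum-principle argument applied to the second derivatives of $u$ reduces the interior estimate to an estimate on $\partial B_R$. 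On $\partial B_R$ the tangential-tangential components of $D^2 u$ are read off from the boundary data together with the principal curvatures of $\partial B_R$, and the pure normal component is then pinned down up to a bounded factor by the equation itself: once all tangential eigenvalues are controlled, $\sigma_k(D^2u) = 1$ together with strict $k$-convexity of $u$ and strict $(k-1)$-convexity of $\partial B_R$ determines the remaining one.

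The main obstacle is the boundary estimate on the mixed tangential-normal second derivatives. This is the delicate step in the CNS program: one constructs an auxiliary function that is a subsolution of the linearized equation, is nonnegative along $\partial B_R$, and matches $\pm u_{\tau\nu}$ at the base point, then applies the maximum principle. The construction exploits the uniform strict $(k-1)$-convexity of $\partial B_R$ in an essential way and is the technical heart of the CNS argument. Once the boundary $C^2$ estimate is in hand, Evans--Krylov yields uniform $C^{2,\alpha}$ bounds along the continuity path, closedness of $T$ follows, and iterated Schauder estimates applied to equations satisfied by the derivatives of $u$ upgrade the solution to $C^\infty(\overline{B_R})$.
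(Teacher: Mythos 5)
The paper does not prove this theorem; it is quoted verbatim from Caffarelli--Nirenberg--Spruck \cite{CNS}, so there is no in-paper argument to compare against. Your outline --- uniqueness by comparison using the concavity of $\sigma_k^{1/k}$ on $\Gamma_k$, existence by the continuity method reduced to uniform global $C^{2}$ bounds, the $C^0$/$C^1$ barriers, and the boundary second-derivative estimates with the mixed tangential-normal component as the technical core --- is a faithful and correct summary of the CNS strategy; the one step you explicitly leave as a black box (the barrier construction controlling $u_{\tau\nu}$ on $\partial B_R$, which uses the strict $(k-1)$-convexity of the boundary) is exactly the part that requires the full machinery of \cite{CNS}, so your proposal should be read as an accurate proof outline rather than a self-contained proof.
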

The domain $B_R$ can in fact be replaced by any smooth, bounded domain whose boundary has second fundamental form in $\Gamma_{k-1}$.

\subsection{Viscosity Solutions}
A continuous function $u$ on a domain $\Omega \subset \mathbb{R}^n$ is a viscosity super (resp. sub) solution to (\ref{Sigmak}) if, for any $C^2$ $k$-convex function $\varphi$ that touches $u$ from below (resp. above) at some point $x_0 \in \Omega$ (that is, $\varphi \leq (\text{resp. }\geq) \,u$ in $\Omega$ and $\varphi(x_0) = u(x_0)$), we have
$$\sigma_k(D^2\varphi)(x_0) \leq \quad (\text{resp. }\geq) \quad 1.$$
We say that $u$ is a viscosity solution to (\ref{Sigmak}) if it is both a viscosity subsolution and supersolution to (\ref{Sigmak}). For $C^2$ $k$-convex functions, the notion of classical and viscosity solution coincide. The property of being a viscosity solution is preserved under local uniform convergence.

\begin{rem}\label{Viscosity1}
The definition of viscosity subsolution to (\ref{Sigmak}) does not change if we allow arbitrary (not just $k$-convex) $C^2$ test functions from above. However, it is important that $k$-convex test functions are used from below in the definition of viscosity supersolution.
\end{rem}

\begin{rem}\label{Viscosity2}
If $u$ is a viscosity solution to (\ref{Sigmak}) in $\Omega \subset \mathbb{R}^n$, then for any ball $B \subset \subset \Omega$, the function $u$ is the uniform limit of smooth viscosity solutions to (\ref{Sigmak}) in $B$ obtained by taking smooth approximations of $u|_{\partial B}$ and the corresponding solutions to the Dirichlet problem from Theorem \ref{Classical}.
\end{rem}

\subsection{Interior Gradient Estimate}
In \cite{T} Trudinger proved the following:
\begin{thm}\label{GradEst}
If $u$ is a smooth $k$-convex solution to (\ref{Sigmak}) in $B_R$, then
$$\|\nabla u\|_{L^{\infty}(B_{R/2})} \leq C(n,\,k)R^{-1}\|u\|_{L^{\infty}(B_R)}.$$
\end{thm}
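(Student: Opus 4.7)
The plan is a Bernstein-type maximum principle argument. By the scale-invariance $u(x)\mapsto R^{-2} u(Rx)$ of \eqref{Sigmak}, I reduce to the case $R=1$ and, after adding a constant, arrange $0\le u\le 2M$ on $B_1$ with $M:=\|u\|_{L^\infty(B_1)}$. Fix a radial cutoff $\eta\in C_c^\infty(B_1)$ with $\eta\equiv 1$ on $B_{1/2}$ and $|\nabla\eta|+|D^2\eta|\le C(n)$.

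I would test the maximum principle on the auxiliary quantity
$$W(x)\;=\;\eta(x)^{2}\,|\nabla u(x)|^{2}\,e^{-\lambda u(x)/M},$$
with $\lambda>0$ to be chosen. Since $W$ vanishes on $\partial B_1$, it attains its maximum at some interior $x_0$, at which $\nabla\log W=0$ and, because $(F_{ij}(D^2u))$ is positive semidefinite on $\overline{\Gamma_k}$, $F_{ij}(\log W)_{ij}\le 0$. Expanding this expression I would use three key identities: (i) Euler's identity $F_{ij}u_{ij}=k\sigma_k=k$; (ii) differentiating $\sigma_k(D^2u)=1$ in the direction $e_l$ gives $F_{ij}u_{ijl}=0$, so third-order terms cancel in $F_{ij}(|\nabla u|^{2})_{ij}=2F_{ij}u_{il}u_{jl}\ge 0$; and (iii) the Newton--Maclaurin lower bound $\mathrm{tr}(DF)=(n-k+1)\sigma_{k-1}(D^2u)\ge c(n,k)>0$, valid on the locus $\{\sigma_k=1\}\cap\overline{\Gamma_k}$. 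Combining these with the first-order condition at $x_0$ to replace $\nabla(|\nabla u|^{2})$ by combinations of $\nabla u$ and $\nabla\eta$, and applying Cauchy--Schwarz (in the $(F_{ij})$ inner product) to absorb cross terms, I expect $F_{ij}(\log W)_{ij}\le 0$ to force $W(x_0)\le C(n,k)M^{2}$. Evaluating on $B_{1/2}$, where $\eta\equiv 1$ and $e^{-\lambda u/M}\ge e^{-2\lambda}$, then delivers the claimed bound $\|\nabla u\|_{L^\infty(B_{1/2})}\le C(n,k)M$.

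The main obstacle is closing the Bernstein inequality. The only intrinsically negative (``good'') terms are those produced by the exponential factor, namely a multiple of $\lambda^{2}F_{ij}u_iu_j/M^{2}$ together with $k\lambda/M$ coming from Euler, and they must absorb the cutoff contributions $F_{ij}\eta_{ij}/\eta$ and $F_{ij}\eta_i\eta_j/\eta^{2}$, which a priori are controlled only by $\mathrm{tr}(DF)$. The Newton--Maclaurin lower bound on $\mathrm{tr}(DF)$ is essential here, and the nonnegative remainder $F_{ij}u_{il}u_{jl}/|\nabla u|^{2}\ge k^{2}/(|\nabla u|^{2}\mathrm{tr}(DF))$ (a consequence of matrix Cauchy--Schwarz against the Euler identity) gives a further reserve when $|\nabla u|$ is not too large. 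If the simple test function $W$ above does not close the estimate directly, I would switch to the refined choice $\eta^{\alpha}\varphi(|\nabla u|^{2})e^{\psi(u)}$ with $\alpha$, $\varphi$, and $\psi$ tuned to the structure of $\sigma_k$ -- the route followed by Trudinger in \cite{T}.
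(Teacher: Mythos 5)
The paper does not actually prove this statement; it quotes it from Trudinger \cite{T}, so there is no internal proof to compare against. Your Bernstein/maximum-principle strategy is indeed the one Trudinger follows, and the identities you list (Euler's relation, the differentiated equation killing third-order terms, Newton--Maclaurin) are all correct and relevant. However, as written the sketch has a genuine gap at exactly the point where the $k$-Hessian gradient estimate is hard, and the hedge ``switch to a refined test function'' defers rather than resolves it.

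The gap is this: the bad cutoff terms are bounded above by $C(n)\,\mathrm{tr}(DF)/\eta^{2}$, and $\mathrm{tr}(DF)=(n-k+1)\sigma_{k-1}(D^2u)$ is \emph{not bounded above} --- there is no a priori Hessian bound, and the equation is only degenerate elliptic. A \emph{lower} bound on $\mathrm{tr}(DF)$ from Newton--Maclaurin does nothing to control terms that are large in proportion to $\mathrm{tr}(DF)$, and the Euler term $k\lambda/M$ is a bounded constant, hence useless against them. The term $\lambda^{2}F_{ij}u_iu_j/M^{2}$ also does not suffice, because $F_{ij}u_iu_j$ need not be comparable to $\mathrm{tr}(DF)|\nabla u|^{2}$: the eigenvalue of $DF$ in the gradient direction can be the smallest one. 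The missing idea (the heart of \cite{T}) is that the only good term capable of scaling with $\mathrm{tr}(DF)$ must be extracted from $F_{ij}u_{il}u_{jl}$: at the maximum point, with coordinates so that $e_1=\nabla u/|\nabla u|$, the first-order condition forces $u_{11}=-\tfrac{\lambda}{2M}|\nabla u|^{2}+O(|\nabla u|/\eta)$ to be \emph{negative} and of size $\lambda|\nabla u|^{2}/M$; since $D^2u\in\Gamma_k$ and its $(1,1)$ entry is negative, one has the structural inequality $F_{11}\geq \sigma_{k-1}(D^2u)=\tfrac{1}{n-k+1}\mathrm{tr}(DF)$, whence $F_{ij}u_{il}u_{jl}/|\nabla u|^{2}\geq F_{11}u_{11}^{2}/|\nabla u|^{2}\gtrsim \mathrm{tr}(DF)\,\lambda^{2}|\nabla u|^{2}/M^{2}$, which beats the cutoff terms once $|\nabla u|\gg M/(\lambda\eta)$. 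Note that with your normalization $0\leq u\leq 2M$ and the factor $e^{-\lambda u/M}$, the first-order condition gives $u_{11}>0$ at the maximum, which destroys this mechanism; the exponent must be increasing in $u$ (or the normalization reversed). A secondary point: with $|\nabla u|^{2}$ inside the logarithm, the Cauchy--Schwarz step $F_{ij}b_ib_j\leq 4F_{ij}u_{il}u_{jl}/|\nabla u|^{2}$ (where $b=\nabla\log|\nabla u|^{2}$) can wipe out the good term entirely; one should use $|\nabla u|$ or a suitable concave function of $|\nabla u|^{2}$ so that a positive multiple of $F_{ij}u_{il}u_{jl}/|\nabla u|^{2}$ survives.
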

A consequence of this estimate and Remark \ref{Viscosity2} is that viscosity solutions to (\ref{Sigmak}) are locally Lipschitz.

\subsection{Chou-Wang Estimate}\label{CWSec}
In \cite{CW} Chou and Wang proved the following Pogorelov-type interior $C^2$ estimate.
\begin{thm}\label{ChouWang}
If $\Omega \subset \mathbb{R}^n$ is a bounded domain, $u \in C^{\infty}(\Omega) \cap C\left(\overline{\Omega}\right)$ is a $k$-convex solution to (\ref{Sigmak}), and $w \in C^{\infty}(\Omega) \cap C\left(\overline{\Omega}\right)$ satisfies
$$Lw \geq 0, \quad w|_{\partial \Omega} = u, \quad w > u,$$
then
$$\sup_{\Omega} [(w-u)^4|D^2u|] \leq C(n,\,k,\,\|\nabla u\|_{L^{\infty}(\Omega)},\, \|\nabla w\|_{L^{\infty}(\Omega)}).$$
Here $L$ denotes the linearized operator $(\sigma_k)_{ij}(D^2u)\partial_i\partial_j$.
\end{thm}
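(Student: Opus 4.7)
The plan is to use a Pogorelov-type auxiliary function whose interior maximum can be controlled via the linearized equation. Motivated by the exponent appearing in the statement, I would consider
$$\Phi(x, \xi) = 4\log(w-u)(x) + \log u_{\xi\xi}(x) + \frac{\beta}{2}|\nabla u(x)|^2$$
for unit vectors $\xi$, where $\beta = \beta(n,k,\|\nabla u\|_{L^\infty(\Omega)},\|\nabla w\|_{L^\infty(\Omega)})$ is a large constant to be chosen. Because $w-u$ vanishes on $\partial\Omega$ and is positive inside, $\Phi$ attains its maximum at some interior $x_0$ and unit direction $\xi_0$. Rotating coordinates so that $D^2u(x_0)$ is diagonal with $\xi_0 = e_1$ and $u_{11} = \lambda_{\max}(D^2u(x_0))$, the problem reduces to bounding $(w-u)^4 u_{11}$ from above at $x_0$.

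The next step is to apply the linearized operator $L = F^{ij}\partial_i\partial_j$, with $F^{ij}=(\sigma_k)_{ij}(D^2u)$, to $\Phi$ and use $L\Phi(x_0)\leq 0$. The key identities are: differentiating $\sigma_k(D^2u)=1$ once yields $L u_l = 0$ for every $l$, hence $L(\tfrac12|\nabla u|^2) = F^{ij}u_{il}u_{jl}$, which is non-negative and produces a favorable $\beta F^{11}u_{11}^2$ term when $D^2u$ is diagonal at $x_0$; differentiating the equation twice along $e_1$ gives $Lu_{11} = -F^{ij,pq}u_{ij1}u_{pq1}$, whose sign is controlled by the concavity of $\sigma_k^{1/k}$ on $\Gamma_k$; and homogeneity gives $F^{ij}u_{ij} = k\sigma_k = k$, so $L(w-u) = Lw - k \geq -k$. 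The first-order condition $\nabla\Phi(x_0)=0$ then lets one eliminate the gradient terms $\nabla u_{11}/u_{11}$ appearing in $L(\log u_{11}) = Lu_{11}/u_{11} - F^{ij}(u_{11})_i(u_{11})_j/u_{11}^2$.

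Combining these pieces, the inequality $L\Phi(x_0)\leq 0$ reduces to a relation in which the dominant favorable term is $\beta F^{11}u_{11}^2$, while the unfavorable terms scale like $(w-u)^{-2}$, $(w-u)^{-1}$, plus third-derivative remainders. Choosing $\beta$ large enough to absorb the bad terms, and using the standard lower bound for $F^{11}$ in the direction of the largest eigenvalue in the $k$-convex regime, yields $(w-u)^4 u_{11} \leq C$ at $x_0$, and hence throughout $\Omega$ by the choice of maximum.

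The main obstacle I expect is the careful bookkeeping of the third-order term $-F^{ij,pq}u_{ij1}u_{pq1}$ and its interaction with $-F^{ij}(u_{11})_i(u_{11})_j/u_{11}^2$ from $L(\log u_{11})$: these must be combined using the concavity of $\sigma_k^{1/k}$ on $\Gamma_k$ together with the first-order conditions in such a way that the resulting remainder is dominated by $\beta F^{11}u_{11}^2$. A secondary technical point is the non-differentiability of $\lambda_{\max}$ at eigenvalue crossings, which is why one works with $u_{\xi\xi}$ for a maximizing direction $\xi$ rather than with $\lambda_{\max}$ directly.
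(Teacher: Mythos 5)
This statement is not proved in the paper: it is quoted as a result of Chou and Wang \cite{CW} (the paper only remarks that the hypothesis on $w$ can be weakened from $k$-convexity to $Lw \geq 0$ by inspecting their proof). So the comparison can only be against the argument in \cite{CW}, and your proposal is in fact an outline of exactly that argument: the Pogorelov quantity $(w-u)^4\,u_{\xi\xi}\,e^{\beta|\nabla u|^2/2}$ (in logarithmic form), maximization over $x$ and $\xi$, the identities $Lu_l=0$, $Lu_{11}=-F^{ij,pq}u_{ij1}u_{pq1}$, $L(w-u)\geq -k$, and the first-order conditions at the maximum. You also correctly use only $Lw\geq 0$ rather than $k$-convexity of $w$, matching the paper's remark.

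However, as a proof the proposal has a genuine gap, and it is precisely the step you defer as "the main obstacle I expect." All of the content of the theorem lives in that absorption argument. Concretely: $L(4\log(w-u))$ contributes the bad term $-4F^{ij}(w-u)_i(w-u)_j/(w-u)^2$, which after using the first-order condition and bounded gradients is of size $C\,(\sum_i F^{ii})/(w-u)^2$ plus cross terms with $\nabla u_{11}/u_{11}$; meanwhile $\sum_i F^{ii}$ can be as large as $C\lambda_1^{k-1}$, and the candidate good term $\beta\sum_i F^{ii}u_{ii}^2$ cannot absorb it termwise since the $u_{ii}$ other than $u_{11}$ need not be large. Showing that the concavity term $-F^{ij,pq}u_{ij1}u_{pq1}$, combined with $-F^{ij}(u_{11})_i(u_{11})_j/u_{11}^2$ and the first-order conditions, controls these remainders is where Chou and Wang's case analysis and the specific exponent $4$ enter; none of this is carried out. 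A second, smaller inaccuracy: there is no "standard lower bound for $F^{11}$ in the direction of the largest eigenvalue" — with $\lambda_1=\lambda_{\max}$, $F^{11}=\sigma_{k-1}(\lambda|1)$ is the \emph{smallest} diagonal entry of $DF$ and can degenerate; the usable fact is $F^{11}u_{11}\geq \frac{k}{n}\sigma_k=\frac{k}{n}$, so the favorable term is really of order $\beta u_{11}$, not $\beta F^{11}u_{11}^2$ with $F^{11}$ bounded below. Finally, since $u$ is only assumed $C^\infty(\Omega)\cap C(\overline\Omega)$, the interior attainment of the maximum of $\Phi$ requires an exhaustion argument ($u_{\xi\xi}$ may blow up at $\partial\Omega$ faster than $(w-u)^4$ vanishes), which should at least be acknowledged.
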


\begin{rem}
In the statement in \cite{CW}, $w$ is actually assumed to be $k$-convex. However, an inspection of the proof reveals that the (less restrictive) condition $Lw \geq 0$ suffices. Note that by the maximum principle, the largest function $w$ satisfying the conditions of Theorem \ref{ChouWang} is the solution to 
$$Lw = 0,\, w|_{\partial \Omega} = u.$$
Note also that $Lu = k$, by the $k$-homogeneity of $\sigma_k$. Theorem \ref{ChouWang} thus says that once we solve
$$Lv = 1, \quad v|_{\partial \Omega} = 0,$$
we get an interior $C^2$ estimate for $u$ depending on the size of $1/|v|$. In other words, interior $C^2$ estimates for (\ref{Sigmak}) arise from a strong maximum principle for the linearized operator. This idea has been explored deeply in the context of the Monge-Amp\`{e}re equation (the case $k = n$), see the discussion in Section \ref{SecondProof}.
\end{rem}

\begin{rem}
Theorem \ref{ChouWang} implies the smoothness of viscosity solutions to (\ref{Sigmak}) near points of pointwise twice differentiability. Indeed, if $u$ solves (\ref{Sigmak}) in a domain $U$ and is pointwise twice differentiable at $x_0 \in U$, i.e. there exists a quadratic polynomial $P$ such that
$$u(x) = P(x) + o(|x-x_0|^2),$$
then $D^2P \in \Gamma_k$. Taking $\epsilon > 0$, then $\epsilon' > 0$ sufficiently small, we can ensure that
$$w := P - \epsilon|x-x_0|^2 + \epsilon'$$
is $k$-convex and satisfies that $w > u,\, w|_{\partial \Omega} = u$ on some small domain $\Omega \subset \subset U$ containing $x_0$. Taking a sequence of smooth approximations to $u$ as in Remark \ref{Viscosity2} and applying Theorem \ref{ChouWang}, we see that the approximations have locally uniformly bounded derivatives of all orders in $\Omega$. This result (smoothness of solutions to elliptic PDEs near points of twice differentiability) of course holds in far greater generality by Savin's small perturbations theorem \cite{S}, which was invoked at an important point in the proof of regularity for (\ref{Sigma2}) in four dimensions \cite{SY2}.
\end{rem}

\section{Proof of Theorem \ref{Main1}}\label{FirstProof}
Before proving Theorem \ref{Main1} we construct a barrier. Let
$$f(s) := s\sqrt{|\log{s}|},\, s \in (0,\,1/2).$$
The function $f$ has unbounded derivative near $0$, and
\begin{equation}\label{FBounds}
ff'' = -1/2 + o(1) \text{ as } s \rightarrow 0^+, \quad sf'f'' = -1/2 + o(1) \text{ as } s \rightarrow 0^+.
\end{equation}

Let $\Sigma$ be a smooth embedded minimal hypersurface in $\mathbb{R}^n$, with $0 \in \Sigma$ and $B_4 \cap \partial \Sigma = \emptyset$. Then there is a tubular neighborhood $\mathcal{N}$ of $\Sigma$ in $B_{3}$ in which the signed distance function $d$ from $\Sigma$ is smooth, the nearest point projection to $\Sigma$ is well-defined and smooth, and the unit normal $\nu$ to $\Sigma$ can be extended smoothly to be constant along lines perpendicular to $\Sigma$. By minimality, we have
\begin{equation}\label{LaplaceD}
|\Delta d| \leq C_0|d|
\end{equation} 
in $\mathcal{N}$ for some $C_0 > 0$. 

Let $h$ be a harmonic function on $\Sigma$. Extend it to $\mathcal{N}$ by taking it to be constant along lines perpendicular to $\Sigma$. Then
\begin{equation}\label{LaplaceH}
|\Delta h| \leq C_1 |d|
\end{equation}
in $\mathcal{N}$ for some $C_1 > 0$. 

Let $\varphi \in C^{\infty}_0(\Sigma \cap B_{1})$, and extend it to $\mathcal{N}$ by taking it to be constant along lines perpendicular to $\Sigma$. 

Finally, fix $K \in \mathbb{R}$, and define $\psi_{\delta}$ on $\mathcal{N}$ by
$$\psi_{\delta} := \delta \varphi f(d) + h + Kd.$$
We claim that there exists $\tau(K,\,C_0,\,C_1) > 0$ small such that
\begin{equation}\label{KeyIneq}
\sigma_2(D^2\psi_{\delta}) < 1
\end{equation}
in $B_{2} \cap \{0 < d < \tau\}$ for all $\delta > 0$ small. 

To verify this, note first that
$$2\sigma_2(D^2\psi_{\delta}) = (\Delta \psi_{\delta})^2 - |D^2\psi_{\delta}|^2 \leq (\Delta \psi_{\delta})^2 - (\psi_{\delta})_{\nu\nu}^2.$$
By construction we have $\nabla d \cdot \nabla \varphi = 0$, so 
$$\Delta \psi_{\delta} = \delta(\varphi f'' + \varphi f'\Delta d + f\Delta \varphi) + \Delta h + K\Delta d.$$
Since $(\psi_{\delta})_{\nu\nu} = \delta \varphi f''$ we conclude that
$$2\sigma_2(D^2\psi_{\delta}) \leq (\Delta h + K\Delta d)^2 + \delta E,$$
where $E$ consists of terms that are products of $\varphi$ and its derivatives, possibly $\delta$, and expressions of the form
\begin{align*}
f'^2(\Delta d)^2,\, f^2,\, f''f'\Delta d,\, f''f,\, f''(\Delta h + K\Delta d),\, \\ 
ff'\Delta d,\, f'\Delta d(\Delta h + K\Delta d),\, \text{ and } f(\Delta h + Kf\Delta d).
 \end{align*}
Using (\ref{LaplaceD}) and (\ref{LaplaceH}), we can fix $\tau$ sufficiently small that
$$(\Delta h + K\Delta d)^2 < 1 \text{ in } \{0 < d < \tau\} \cap B_2.$$
Using (\ref{FBounds}), (\ref{LaplaceD}), and (\ref{LaplaceH}), it is easy to see that all the terms in $E$ are bounded in $\{0 < d < \tau\} \cap B_2$, so for $\delta$ small we have $\delta E < 1$ and the desired estimate (\ref{KeyIneq}) follows.

\begin{proof}[{\bf Proof of Theorem \ref{Main1}}]
Assume by way of contradiction that $h$ touches $u|_{\Sigma}$ from above at a point in $\Sigma$. After translating and rescaling quadratically we may assume that $u$ is a viscosity solution of (\ref{Sigma2}) in $B_4$, that $0 \in \Sigma$, that $B_4 \cap \partial \Sigma = \emptyset,$ and that $h$ touches $u|_{\Sigma}$ from above at $0$. Take $\varphi \in C^{\infty}_0(\Sigma \cap B_1)$ such that $\varphi(0) < 0$. Let $K$ be twice the Lipschitz constant for $u$ in $B_3$ (note that $K < \infty$, see Theorem \ref{GradEst} and the comment that follows). Finally, take $\psi_{\delta}$ and $\tau$ as above.

By the choice of $K$, we have for $\delta > 0$ small that $\psi_{\delta} \geq u$ on $\partial (\{0 < d < \tau\} \cap B_2)$. Taking $\delta > 0$ smaller if necessary we also have (\ref{KeyIneq}). We conclude that $\psi_{\delta} \geq u$ in $\{0 < d < \tau\} \cap B_2$ (see Remark \ref{Viscosity1}). However, it is clear from the definition of $f$ and the fact that $\varphi(0) < 0$ that there exist nonzero points $x \in \{0 < d < \tau\} \cap B_2$ such that
$$|x|^{-1}(\psi_{\delta}(x) - \psi_{\delta}(0)) \rightarrow -\infty.$$
Combining this with $u \leq \psi_{\delta},\, u(0) = \psi_{\delta}(0)$, we contradict that $u$ is locally Lipschitz.
\end{proof}

\begin{rem}
The barrier shares features with the (simpler) one used to prove a form of strict convexity in Lemma 2.1 of \cite{MS}.
\end{rem}

\begin{rem}
It is clear from the proof that the assumption that $u$ solves (\ref{Sigma2}) in the viscosity sense can be relaxed to the assumption that $u$ is a Lipschitz viscosity subsolution to (\ref{Sigma2}).
\end{rem}

\section{Proof of Theorem \ref{Main2}}\label{SecondProof}
In this section we assume that $u$ is a smooth solution to $\sigma_2(D^2u) = 1$ in $B_1$. We fix $p > 2$ and we let $K := \|\Delta u\|_{L^{p}(B_1)}$. 

\begin{lem}\label{BasicMeasure}
There exist $c_0(n,\,K,\,p),\, \Lambda(n,\,p) > 0$ such that if
$$\text{tr}(DF\,D^2\varphi) \leq 0 \text{ and } \varphi \geq 0 \text{ in } B_r(x) \subset B_1,$$
then
$$|\{\varphi \leq 2\varphi(x)\} \cap B_r(x)| \geq c_0r^{\Lambda}.$$
\end{lem}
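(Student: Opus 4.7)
The plan is to follow the ABP route advertised in the introduction: apply the Alexandrov--Bakelman--Pucci estimate to a quadratic perturbation of $\varphi$, then convert the resulting ABP integrand into a power of $\Delta u$ that is absorbed by the $L^p$ hypothesis on $\Delta u$. By homogeneity of the hypothesis in $\varphi$, we may assume $\varphi(x) = 1$; the degenerate case $\varphi(x) = 0$ is handled separately by the strong maximum principle applied to the linear elliptic operator $L$, which forces $\varphi \equiv 0$ on $B_r(x)$ and trivializes the conclusion. Translate so $x = 0$.

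The first concrete step is to consider the auxiliary function
$$v(y) := \varphi(y) + \frac{2}{r^2}|y|^2 - 2,$$
which satisfies $v(0) = -1$ and $v \geq 0$ on $\partial B_r$. Since $L(|y|^2) = 2\operatorname{tr}(DF) = 2(n-1)\Delta u$ and $L\varphi \leq 0$ by hypothesis, one immediately gets $(Lv)^+ \leq 4(n-1)r^{-2}\Delta u$. Applying ABP to $v$, with $\Gamma$ denoting the convex envelope of $\min(v, 0)$ in $B_r$, yields
$$1 \;\leq\; \sup_{B_r}(-v) \;\leq\; C(n)\, r \left( \int_{\{v=\Gamma\}} \frac{((Lv)^+)^n}{\det DF}\right)^{1/n}.$$
Substituting the bound on $Lv$ together with (\ref{BasicIneqs}), namely $\det DF \geq c(n)(\Delta u)^{n-2}$, collapses the integrand to a multiple of $r^{-2n}(\Delta u)^2$, so
$$r^n \;\leq\; C(n) \int_{\{v=\Gamma\}} (\Delta u)^2.$$
H\"older's inequality with exponents $p/2$ and $p/(p-2)$, together with $\|\Delta u\|_{L^p(B_1)} \leq K$, then produces $|\{v=\Gamma\}| \geq c_0(n,p,K)\, r^{np/(p-2)}$. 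Finally, $\Gamma \leq 0$ forces $v \leq 0$ on the contact set, hence $\{v=\Gamma\} \subset \{\varphi \leq 2\} = \{\varphi \leq 2\varphi(x)\}$, and the lemma follows with $\Lambda = np/(p-2)$.

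The feature that drives this argument is the exact matching of exponents: the bound $\det DF \geq c(\Delta u)^{n-2}$ combined with $\operatorname{tr}(DF) = (n-1)\Delta u$ is precisely what reduces the $n$-th power of $\Delta u$ that naively appears in ABP down to a second power of $\Delta u$, which is exactly the threshold absorbed by any $L^p$ control with $p>2$. I do not foresee a substantial obstacle beyond bookkeeping; the only nontrivial verification is that the ABP contact set lies in $\{v \leq 0\}$, which is built into the standard statement. The dependence of $\Lambda$ only on $n$ and $p$ (and not $K$) is also forced by this computation, since $K$ enters only through the Hölder constant in front.
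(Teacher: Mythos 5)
Your proof is correct and is essentially the paper's argument: the paper implements the same ABP idea by sliding concave paraboloids of opening $-r^{-2}I$ and applying the area formula to the vertex map, rather than invoking the classical ABP estimate with the convex envelope, but the decisive computation $(\operatorname{tr}DF)^n/\det DF \leq C(n)(\Delta u)^2$ via (\ref{BasicIneqs}), the H\"older step against $\|\Delta u\|_{L^p}$, and the resulting exponent $\Lambda = np/(p-2)$ are identical. No substantive difference.
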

\begin{proof}
Assume without loss of generality that $x = 0$ and $\varphi(0) = 1/16$. Slide the paraboloids of Hessian $-r^{-2}I$ and vertex in $\overline{B_{r/4}}$ up until they touch $\varphi$. Then the contact happens on a compact set $E \subset \overline{B_{3r/4}} \cap \{\varphi \leq 1/8\}$. At a contact point $x$, the corresponding vertex $y$ is given by
$$y(x) = x + r^2\nabla \varphi(x).$$
The differential of this map is given by
$$Dy(x) = I + r^2D^2\varphi(x).$$
At $x$ it is obvious that $D^2\varphi \geq -r^{-2}I$, hence $Dy(x) \geq 0$. Using AGM, the equation for $\varphi$, and (\ref{BasicIneqs}) we conclude that
$$|\det Dy| = \frac{\det(DF + r^2DF D^2\varphi)}{\det DF} \leq \frac{(\text{tr}DF)^n}{\det DF} \leq C(n)(\Delta u)^2.$$
Using the area formula and H\"{o}lder's inequality, we conclude that
$$|B_{r/4}| \leq \int_{E} |\det Dy| \leq C(n)\int_{E} (\Delta u)^2 \leq C(n,K)|E|^{1-2/p}.$$
Since $\varphi \leq 1/8$ on $E$, the inequality follows.
\end{proof}

Below we let 
$$\psi := \Delta u,$$
and we recall that 
$$\text{tr}(DFD^2\psi) \geq 0$$ 
by the convexity of $\{F= 1\} \cap \Gamma_2$.

\begin{lem}\label{RadBound}
There exist $C_1(n,\,K,\,p),\,\delta(n,\,p) > 0$ such that if $B_r(x) \subset B_1,\, \psi(x) = T > 0,$ and $B_r(x) \subset \{\psi < 3T/2\}$, then
$$r < \frac{1}{2}C_1T^{-\delta}.$$
\end{lem}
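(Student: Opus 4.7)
The plan is to reduce Lemma~\ref{RadBound} to Lemma~\ref{BasicMeasure} by a sign flip, and then combine the resulting measure estimate with the $L^p$ bound $K$ via a one-sided Chebyshev inequality.

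Since $\psi = \Delta u$ satisfies $\text{tr}(DF\,D^2\psi) \geq 0$ by the convexity of $\{F = 1\} \cap \Gamma_2$, and $\psi < 3T/2$ on $B_r(x)$, the shifted function
$$\varphi := \frac{3T}{2} - \psi$$
is non-negative on $B_r(x)$, satisfies $\text{tr}(DF\,D^2\varphi) \leq 0$, and has $\varphi(x) = T/2$. First I would verify that these three properties place $\varphi$ in the scope of Lemma~\ref{BasicMeasure}, which then yields
$$\left|\{\varphi \leq T\} \cap B_r(x)\right| \geq c_0\, r^{\Lambda}.$$
Rewriting $\{\varphi \leq T\} = \{\psi \geq T/2\}$, this is a lower bound on the measure of the set within $B_r(x)$ where $\psi$ is already comparable to $T$.

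Next I would extract the bound on $r$ by integrating $\psi^p$ over this superlevel set. Using $\psi \geq T/2$ there (and $\psi > 0$ on $B_1$ since $u$ is $2$-convex) gives
$$K^p \geq \int_{\{\psi \geq T/2\} \cap B_r(x)} \psi^p \geq c_0 \left(\frac{T}{2}\right)^{\!p} r^{\Lambda}.$$
Solving for $r$ yields $r \leq C(n,K,p)\, T^{-p/\Lambda}$, which is the claimed estimate with $\delta := p/\Lambda$ (note $\delta > 0$ since $p > 2$ and $\Lambda$ is finite and positive) and $C_1$ chosen as twice this constant.

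Since the two main inputs are already available, I do not anticipate a genuine obstacle. The only conceptual point worth flagging is that Lemma~\ref{BasicMeasure} is oriented for the supersolution side of the linearized operator, so $\psi$ itself (a subsolution) cannot be inserted directly; one must pair its subsolution property with the pointwise ambient upper bound $\psi < 3T/2$ on $B_r(x)$ to produce an admissible non-negative $\varphi$. Everything else is a direct computation.
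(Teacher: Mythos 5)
Your argument is correct and is essentially the paper's own proof: apply Lemma~\ref{BasicMeasure} to $\varphi = 3T/2 - \psi$ to get $|\{\psi \geq T/2\} \cap B_r(x)| \geq c_0 r^{\Lambda}$, then use Chebyshev with $\int \psi^p = K^p$ to solve for $r$ with $\delta = p/\Lambda$. No discrepancies to report.
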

\begin{proof}
Applying Lemma \ref{BasicMeasure} to $3T/2-\psi$ in $B_r(x)$, we get
$$|\{\psi \geq T/2\} \cap B_r(x)| \geq c_0r^{\Lambda}.$$
On the other hand, we have
$$(T/2)^{p}|{\psi \geq T/2}| \leq \int \psi^{p} = K^p.$$
\end{proof}

\begin{proof}[{\bf Proof of Theorem \ref{Main2}}]
Choose $M(n,\,K,\,p)$ so large that
$$\sum_{k \geq 0} r_k < 1/4, \quad r_k := C_1M^{-\delta}(3/2)^{-\delta k}.$$
Here $C_1,\,\delta$ are the constants from the statement of Lemma \ref{RadBound}.
Assume by way of contradiction that $\psi(x_0) \geq M$ for some $x_0 \in B_{1/2}$. We claim that for all $k \geq 1$, there exists $x_k$ such that $|x_k - x_0| \leq \sum_{i <k} r_i$ (in particular, $x_k \in B_{3/4}$) and $\psi(x_k) \geq (3/2)^kM$, contradicting the continuity of $\psi$. This comes from inductively applying Lemma \ref{RadBound} in $B_{r_k}(x_k),\, k \geq 0$.
\end{proof}

\begin{rem}
This argument shows, more precisely, that
$$\|\Delta u\|_{L^{\infty}(B_{1/2})} \leq C(n,\,p)\|\Delta u\|_{L^p(B_1)}^{\frac{p}{p-2}}.$$
The bound on the full Hessian follows from the equation: 
$$\sqrt{2 + |D^2u|^2} = \Delta u.$$
\end{rem}

We conclude the section with a few remarks concerning the geometry of the linearized equation
$$F_{ij}(D^2u)\varphi_{ij} = 0.$$
It is not hard to check that $\partial_i(F_{ij}(D^2u)) = 0$ for all $j$, so the equation can be rewritten in divergence form as
$$\partial_i(F_{ij}(D^2u)\varphi_j) = 0,$$
or
\begin{equation}\label{Geometric}
\Delta_g\varphi = 0.
\end{equation}
Here $\Delta_g$ is the Laplace-Beltrami operator $\frac{1}{\sqrt{\det g}}\partial_i(\sqrt{\det g}g^{ij}\partial_j)$ with respect to the metric
$$g := (\det DF)^{\frac{1}{n-2}}(DF)^{-1}.$$
(We assume $n \geq 3$. The case $n = 2$ is the Monge-Amp\`{e}re equation, which we discuss below.) Since $g$ has volume element 
$$\sqrt{\det g} = (\det DF)^{\frac{1}{n-2}} \geq \Delta u \quad (\text{by inequality \ref{BasicIneqs}}),$$ 
the requirement that $\psi = \Delta u$ is slightly better than $L^2$ corresponds to $\psi$ being slightly better than integrable with respect to the metric $g$. From this point of view, estimates in terms of $\|\psi\|_{L^p},\, p > 2$ are perhaps somewhat natural.

However, the form (\ref{Geometric}) of the equation suggests that one should not work with euclidean balls, resp. paraboloids as we do in the proof of Theorem \ref{Main2}, but instead with balls taken with respect to the metric $g$, resp. solutions to the Dirichlet problem 
$$\Delta_gP = \text{const.}$$ 
on these balls with linear boundary data. This perspective is useful in the study of the Monge-Amp\`{e}re equation (the case $k = n$ of (\ref{Sigmak})). In that case, the metric is $D^2u$, the correct notion of a ball is a sublevel set of $u + \text{linear}$, and $u + \text{linear}$ is the correct notion of paraboloid. Moreover, the Monge-Amp\`{e}re equation is affine invariant, which gives a way of renormalizing these balls. Using these observations, Caffarelli and Gutierrez \cite{CG} extended the Krylov-Safonov theory \cite{KS} of uniformly elliptic equations to the linearized Monge-Amp\`{e}re equation, when $u$ is strictly convex. In particular, if $\det D^2u = 1$, $u$ is strictly convex, and $\varphi \geq 0$ satisfies
$$u^{ij}\varphi_{ij} \geq 0,$$
then similar arguments to those in the proof of Theorem \ref{Main2} using the correct balls and paraboloids give an interior estimate for $\|\varphi\|_{L^{\infty}}$ in terms of $\|\varphi\|_{L^1}$. As observed e.g. by Maldonado in \cite{Ma}, applying this to $\varphi = \Delta u$ gives a different proof of Pogorelov's interior $C^2$ estimate for strictly convex solutions to the Monge-Amp\`{e}re equation (\cite{P}, also the case $k = n$ of Theorem \ref{ChouWang}). 

\begin{rem}
A streamlined exposition of this proof of Pogorelov's estimate, as well as another short proof using the ABP method that exploits Yuan's Jacobi inequality \cite{Y}, are given in the forthcoming book \cite{M1}.
\end{rem}

In the theory of the linearized Monge-Amp\`{e}re equation, the strict convexity of the underlying function $u$ allows for geometric control of the correct balls and paraboloids. More precisely, they behave like Euclidean balls and paraboloids, up to affine renormalizing transformations that can be effectively estimated. One might hope for a proof of Theorem \ref{ChouWang} along similar lines, using strict $k$-convexity to say that correctly chosen balls and paraboloids (solutions to the linearized equation with constant RHS on these balls, with linear boundary data) behave like the Euclidean ones up to some controllable renormalizations. However, it is still not clear which balls to choose, and the lack of an invariance group for (\ref{Sigmak}) that allows renormalizations when $k < n$ makes this seem like a difficult route.





\section{Discussion}\label{Discussion}
To motivate Theorem \ref{Main1}, a useful starting point is to examine the distance function $d$ from a smooth minimal hypersurface $\Sigma \subset \mathbb{R}^n$. It is well-known that
\begin{equation}\label{DistFcn}
\Delta d \leq 0
\end{equation}
on both sides of $\Sigma$ (see e.g. \cite{CCo}). We claim that $d$ is a viscosity supersolution of (\ref{Sigma2}), despite having an ``upwards corner" on $\Sigma$.

To see this, assume that $\varphi \in C^2$ is $2$-convex and touches $d$ from below at $x_0$. The $2$-convexity of $\varphi$ implies that
\begin{equation}\label{Sigma2Basic}
\Delta \varphi \geq |D^2\varphi|.
\end{equation}
It follows immediately from (\ref{DistFcn}) and (\ref{Sigma2Basic}) that if $x_0 \notin \Sigma$, then $D^2\varphi(x_0) = 0$. The alternative is that $x_0 \in \Sigma$. Let $\nu$ be the unit normal to $\Sigma$ at $x_0$ and let $\Delta_{\Sigma}$ denote the Laplace operator on $\Sigma$. By the minimality of $\Sigma$ we have 
$$\Delta_{\Sigma}\varphi(x_0) = \Delta \varphi(x_0) - \varphi_{\nu\nu}(x_0).$$
Since $\varphi \leq 0$ on $\Sigma$ and $\varphi(x_0) = 0$, we have $\Delta_{\Sigma}\varphi(x_0) \leq 0$. We conclude using the previous identity and (\ref{Sigma2Basic}) that
$$0 \geq \Delta \varphi(x_0) - \varphi_{\nu\nu}(x_0) \geq |D^2\varphi(x_0)| - \varphi_{\nu\nu}(x_0).$$
It follows that at least $n-1$ eigenvalues of $D^2\varphi(x_0)$ vanish, hence that 
$$\sigma_2(D^2\varphi)(x_0) = 0 \leq 1$$ 
as desired.

\begin{rem}
The argument in fact shows that $\sigma_2(D^2d) \leq 0$ in the viscosity sense.
\end{rem}

In view of this fact, one strategy to produce a singular solution to (\ref{Sigma2}) is to build a viscosity subsolution to (\ref{Sigma2}) that lies beneath the distance function from some minimal surface $\Sigma$, vanishes on $\Sigma$, and also has an ``upwards singularity" (e.g. an upwards corner or Hessian unbounded from above) along $\Sigma$. This is no problem for a subsolution, since $C^2$ test functions could not touch from above at such points. One could then solve the Dirichlet problem for (\ref{Sigma2}) with boundary data in between that of said viscosity subsolution and the distance function from $\Sigma$, to produce a viscosity solution that (by the maximum principle) is trapped in between the two, and thus inherits a singularity along $\Sigma$. However, Theorem \ref{Main1} is an obstruction to carrying out this strategy.

\begin{rem} 
The idea of building exact solutions having some property by building appropriate super and subsolutions, solving the Dirichlet problem, and applying the maximum principle is pervasive in elliptic PDE. See e.g. \cite{M3} to see this idea carried out in the context of the Monge-Amp\`{e}re equation, and \cite{BDG} in the context of the minimal surface equation.
\end{rem}

As mentioned in the introduction, another idea towards building singular solutions to (\ref{Sigma2}), and more generally to (\ref{Sigmak}), is to model them on global one-homogeneous viscosity solutions to
\begin{equation}\label{Homogeneous}
\sigma_k(D^2u) = 0.
\end{equation}
The simplest examples have the form $u(x) = w(x'),$ where $x' \in \mathbb{R}^m,\, m < n$, and $w$ is one-homogeneous and smooth away from the origin. The reason we take $m < n$ is that we expect singularities to propagate to the boundary, by the convexity of $\{\sigma_k = 1\} \cap \Gamma_k$. As proof of concept, in the case $n \geq k \geq 3$, solutions to (\ref{Homogeneous}) of the form 
$$u(x) = |x'|, \quad x' \in \mathbb{R}^m, \quad k/2 < m  < k$$ 
all in fact arise as blowups by one-homogeneous rescaling of singular solutions to (\ref{Sigmak}) (see e.g. \cite{M4}, see also \cite{C}, \cite{M3}, \cite{CY}).

In the case $k = 2$, Theorems \ref{Main1} and \ref{Main2} suggest that none of these models arise in a simple way. The case $m = 1$ corresponds to the model $u = |x_1|$, which vanishes on a minimal surface. The cases $m = 2,\,3$ reduce to the case $m = 1$. Indeed, in both of these cases, $D^2w$ has at most one nonzero eigenvalue, by one-homogeneity when $m = 2$ and by one-homogeneity and the equation (\ref{Homogeneous}) when $m = 3$. Since $w$ is subharmonic we conclude that it is convex. If its subgradient image were at least two-dimensional, we could touch $w$ from below at $0$ by a convex quadratic polynomial with two positive Hessian eigenvalues, contradicting (\ref{Homogeneous}). Finally, in general, the model is in $W^{2,\,p}$ for all $p < m$, which when $m \geq 3$ appears ``too regular" to give rise to a singularity, according to Theorem \ref{Main2}. 

\begin{rem}
The functions
$$u(x',\,x'') = |x'|(1+|x''|^2), \quad x' \in \mathbb{R}^2, \quad x'' \in \mathbb{R}^{n-2}, \quad n \geq 3,$$
modeled on $|x'|$, solve
$$\frac{1}{2}\sigma_2(D^2u) = n-2 + (n-2)(n-3)|x'|^2 + (n-4)|x''|^2$$
away from $\{|x'| = 0\}$. The RHS is positive and analytic near zero. However, neither $u$ nor its model $|x'|$ are viscosity  supersolutions to the equations they solve classically away from $\{|x'| = 0\}$.
\end{rem}

\begin{rem}\label{Hedgehogs}
It is still interesting to understand what the one-homogeneous solutions to (\ref{Homogeneous}) can look like. One-homogeneous functions $u$ on $\mathbb{R}^n$ have a gradient image (``hedgehog") $\nabla u(\mathbb{S}^{n-1})$ with a second fundamental form $\mathrm{I\!I}$ satisfying
$$\mathrm{I\!I}(\nabla u(x)) = (D_T^2u)^{-1}(x)$$
at points $x \in \mathbb{S}^{n-1}$ where $D_T^2u$ is nonsingular. Here $D_T^2u$ is the Hessian of $u$ restricted to the tangent plane $T$ to $\mathbb{S}^{n-1}$ at $x$. If $u$ solves (\ref{Homogeneous}), we thus have
$$\frac{\sigma_{n-1-k}}{\sigma_{n-1}}(\mathrm{I\!I}) = 0, \quad \mathrm{I\!I}^{-1} \in \Gamma_{k}$$
at regular points of the hedgehog. The case $k = n-2$ is particularly inviting, since it corresponds to functions whose hedgehogs are minimal surfaces where they are regular. This corresponds to the case $k = 2,\, m = 4$ in the singularity models considered above. The information that $k \notin \text{rank}(D^2u)$ could play an important role in classification results for one-homogeneous solutions to (\ref{Homogeneous}), as it did in the case $k = 2,\,m = 3$ discussed above. 
\end{rem}

\begin{rem}
One-homogeneous functions have been studied in the context of singular solutions to fully nonlinear PDEs by many authors, see e.g. \cite{HNY}, \cite{M2}, and the references therein.
\end{rem}

To conclude we remark that beyond not being modeled in a simple way on the examples discussed above, any singular solution to (\ref{Sigma2}) would need to satisfy:
\begin{enumerate}
\item The singular set propagates to the boundary (convexity of $\{\sigma_2 = 1\} \cap \Gamma_2$),
\item The solution has unbounded Hessian from both sides in every neighborhood of a singular point (in view of the a priori estimate for semiconvex solutions in \cite{SY1}),
\item The solution has Hessian that satisfies the dynamic semiconvexity condition (1.2) from \cite{SY2} at some points and does not satisfy it at others in every neighborhood of a singular point. Here we are using that (\ref{Sigma2}) is uniformly elliptic at points where the dynamic semiconvexity condition is not satisfied.
\end{enumerate}
Thus, if a singular solution to (\ref{Sigma2}) exists, it is probably complicated, especially in comparison to the known singular solutions to (\ref{Sigmak}) in the case $n \geq k \geq 3$. 




\end{document}